\newtheorem{definition}{Definition}[section]
\newtheorem{proposition}{Proposition}[section]
\newtheorem{example}{Example}[section]
\newtheorem{question}{Question}[section]
\newtheorem*{notation}{Notation}
\newtheorem*{acknowledgements}{Acknowledgements}
\newtheorem{remark}{Remark}[section]
\newcommand{\GZip}{\mathop{\text{$G$-{\tt Zip}}}\nolimits}
\newcommand{\GZipmu}{\mathop{\GZip^{\mu}}\nolimits}
\newcommand{\GF}{\mathop{\text{$G$-{\tt ZipFlag}}}\nolimits}
\newcommand{\GFmu}{\mathop{\GF^{\mu}}\nolimits}
\newcommand{\Hdg}{\mathop{\left[P\backslash *\right]}\nolimits}
\newcommand{\Hdgd}{\mathop{\left[P_{d}\backslash *\right]}\nolimits}
\author{Simon Cooper}
\title{Tautological rings of Hilbert modular varieties}
\begin{document}
	\maketitle
	
	\begin{abstract}
		We compute the tautological ring for a Hilbert modular variety at an unramified prime. The method generalises that of van der Geer from the Siegel case.
	\end{abstract}
	
	\section{Introduction}
	
	In this note we shall compute the tautological ring of Hilbert modular varieties in characteristic $p$.
	
	\vspace{3mm}
	
	For a Shimura variety $S_{K}$, its tautological ring $T^{\bullet}(S_{K})$ is the subring of its Chow ring with rational coefficients generated by the Chern classes of automorphic vector bundles. This ring was first defined and studied by van der Geer \cite{vanderGeer} in the Siegel case and later by Wedhorn-Ziegler \cite{Wedhorn-Ziegler} for Shimura varieties admitting an embedding into a Siegel Shimura variety (i.e. Hodge-type). Given an integral canonical model of a Shimura variety, conjectured to exist at hyperspecial level by Milne \cite{Milne} (and proven to exist for Hodge- and abelian- type by Kisin \cite{Kisin} and Vasiu \cite{Vasiu}), one can also define the tautological ring for the reduction of a Shimura variety to characteristic $p$. When suitable toroidal compactifications $S_{K}^{tor}$ exist there is the folk-lore conjecture that $$T^{\bullet}(S_{K}^{tor}) \cong H^{2\bullet}(\mathbf{X}^{\vee}(\mathbb{C}),\mathbb{Q})$$ where $\mathbf{X}^{\vee}$ is the compact dual. Van der Geer proved this in the Siegel case in all characteristics and Wedhorn-Ziegler proved that it holds for Shimura varieties of Hodge-type in characteristic $p$ (where suitable toroidal compactifications of integral models exist by work of Madapusi-Pera \cite{MadapusiPera}). We are interested in the related question:
	
	\begin{question}
		Is there a similar description of $T^{\bullet}(S_{K})$?
	\end{question} 

	In the Siegel case the answer is yes. Indeed, van der Geer proved that $T^{\bullet}(A_{g}) \cong H^{2\bullet}({\mathbf{X}_{g-1}}^{\vee})$, where $\mathbf{X}_{g-1}$ is the double Siegel half-space. In this note we compute $T^{\bullet}(S_{K})$ for a Hilbert modular variety in characteristic $p$ (see the beginning of section 5 for a precise statement).
	
	\newtheorem*{thm1}{Theorem 1}
	\begin{thm1}
		Let $X_{K}$ be a Hilbert modular variety at an unramified prime. $$T^{\bullet}(X_{K}) \cong H^{2\bullet}(\textbf{X}^{\vee})/(\text{top degree part})$$
	\end{thm1}  

	In particular, the only extra relation is the vanishing of the top Chern class of the Hodge vector bundle pulled back from the Siegel case. It is not clear what a general description for Hodge-type Shimura varieties could be. For example, $S_{K}$ could itself be compact. 
	
	\vspace{3mm}
	
	The proof of Theorem 1 generalises the method of van der Geer from the Siegel case, using properness of subvarieties to obtain positivity of certain classes. Pullback gives a surjective map of graded rings 
	$$H^{2\bullet}(X^{\vee}) \cong T^{\bullet}(S_{K}^{tor}) \longrightarrow T^{\bullet}(S_{K})$$
	Our aim is to determine the kernel of this morphism of graded rings. Section 2 recalls the Hilbert modular datum and in section 3 we review Wedhorn-Ziegler's definition of $T^{\bullet}(S_{K})$ and computation of $T^{\bullet}(S_{K}^{tor})$ for the Hilbert modular datum. In section 4 we choose an explicit Siegel embedding and explain that the pullback of the Hodge vector bundle splits into line bundles whose Chern classes generate $T^{\bullet}(S_{K}^{tor})$. This automatically gives a new relation coming from the vanishing of the top Chern class of the Hodge vector bundle in the Siegel case \cite[1.2]{vanderGeer}. The final section tackles the problem of showing that there are no further relations. This reduces to an analysis of the highest degree part, which is at most $d$-dimensional. There are $d$ elements given by multiples of $d-1$ generators and we show that they are linearly independent. This requires the properness of certain strata closures with a description of their cycle classes and is the novel difficulty in the Hilbert modular case: in the Siegel case the highest degree part is $1$-dimensional so it sufficed to find a non-zero class.
	
	\begin{acknowledgements}
		The author would like to thank W. Goldring for his support and guidance and D. Petersen for helpful discussions. The author is supported by grant KAW 2018.0356.
	\end{acknowledgements}
	
	\section{Hilbert modular datum}
	
	We begin by recalling the Shimura datum which gives a Hilbert modular variety and describing the cocharacter datum one obtains from this in characteristic $p$. 
	
	\vspace{3mm}
	
	Let $F$ be a totally real field of degree $d>1$ over $\mathbb{Q}$. Define $\mathbf{G}$ as the algebraic group  
	\[\begin{tikzcd}
	\mathbf{G} \arrow[r] \arrow[d]& \mathbb{G}_{m,\mathbb{Q}} \arrow[d]\\ Res_{F/\mathbb{Q}}GL_{2} \arrow[r, "det"]& Res_{F/\mathbb{Q}}\mathbb{G}_{m}
	\end{tikzcd}\] 
	Define $\mathbf{X}$ as the $\mathbf{G}(\mathbb{R})$-conjugacy class of the cocharacter $h \colon \mathbb{S} \rightarrow G_{\mathbb{R}}$ given on $\mathbb{R}$-points $\mathbb{C}^{\times} \rightarrow GL_{2}(\mathbb{R})^{d}$ by $$x+iy \mapsto \left(\begin{pmatrix}
	x & y \\ y & -x
	\end{pmatrix}\right)_{i \leq d}$$
	
	$(\mathbf{G},\mathbf{X})$ is a Shimura datum, henceforth referred to as the Hilbert modular datum given by $F$. The associated Shimura variety is the Hilbert modular variety and is of PEL-type. Indeed, Raynaud constructed a model over $\mathbb{Z}$ as a moduli space of principally-polarised abelian varieties with real multiplication by $\mathcal{O}_{F}$.
	
	\begin{definition}
		 Let $p$ be a rational prime unramified in $F$. Fix an algebraic closure $k$ of $\mathbb{F}_{p}$ and a hyperspecial level $K = K^{p}K_{p}$. Denote $X_{K}$ the special fibre of the integral model of the Hilbert modular variety at level $K$ and $X_{K,k}$ its base change to $k$. Fix a smooth proper toroidal compactification $X_{K}^{tor}$, which exists by \cite{MadapusiPera}.
	\end{definition}
	
	Let $G$ be the special fibre of a smooth reductive $\mathbb{Z}_{p}$-model of $\mathbf{G}$. We obtain a cocharacter datum $(G,\mu)$ as in \cite[1.2.2]{GoldringKoskivirta1}.  If $p$ is inert then $G$ is the fibre product
	\[\begin{tikzcd}
	G \arrow[r] \arrow[d]& \mathbb{G}_{m,\mathbb{F}_{p}} \arrow[d]\\ Res_{\mathbb{F}_{p^d}/\mathbb{F}_{p}}GL_{2} \arrow[r, "det"]& Res_{\mathbb{F}_{p^d}/\mathbb{F}_{p}}\mathbb{G}_{m}
	\end{tikzcd}\]
	and if $p$ splits then $G$ is the fibre product
	\[\begin{tikzcd}
	G \arrow[r] \arrow[d]& \mathbb{G}_{m,\mathbb{F}_{p}} \arrow[d]\\ (GL_{2,\mathbb{F}_{p}})^{d} \arrow[r, "det"]& (\mathbb{G}_{m,\mathbb{F}_{p}})^{d}
	\end{tikzcd}\]
	
	Fix an embedding $\mathbb{F}_{p}\hookrightarrow k$ so we can identify $\{\tau \colon \mathbb{F}_{p} \hookrightarrow k\} \cong \mathbb{Z}/d\mathbb{Z}$. For any unramified prime, the group $G_{\mathbb{F}_{p^{d}}}$ can be described as the $d$-tuples in $GL_{2}^{d}$ which have the same determinant in each factor. Any field extension $\mathbb{F}_{p^{d}} \subset K$ contains the field of definition $\kappa = \mathbb{F}_{p}$ of $\mu$ and
	$$\mu_{K} \colon \mathbb{G}_{m,K} \longrightarrow G_{K}$$ is given by $t \mapsto (\left(\begin{pmatrix}
	t & 0 \\0 & 1
	\end{pmatrix}\right)_{i\leq d},t)$.
	
	\vspace{3mm}
	
	Let $P$ be the parabolic subgroup associated with $\mu$ given on $k$-valued points $P(k) = \{(\begin{pmatrix}
	a_{i} & b_{i} \\ 0 & d_{i}
	\end{pmatrix}, a_{i}d_{i})\} \subset G(k)$ (this is called $P_{-}(\mu)$ in \cite{Wedhorn-Ziegler}, $\mathbf{P}^{-}$ in \cite{GoldringKoskivirta1}). This has a Levi subgroup $L = Cent(\mu) = \{(\left(\begin{pmatrix}
		a_{i} & 0 \\ 0 & d_{i}
		\end{pmatrix}\right), a_{i}d_{i})\}$ which is a maximal torus $T$.
	
	\section{Tautological ring of a toroidal compactification}
	
	In this section we review the computation of $T^{\bullet}(X_{K}^{tor})$.
	
	\vspace{3mm}
	
	For a Hodge-type Shimura variety $S_{K}$ in characteristic $p$, Wedhorn-Ziegler \cite{Wedhorn-Ziegler} proved that $T^{\bullet}(S_{K}^{tor}) \cong A^{\bullet}(\GZipmu)$, where $\GZipmu$ is the stack introduced in \cite{PinkWedhornZiegler} defined from the cocharacter datum $(G,\mu)$ and classifying certain semi-linear algebraic data. In characteristic $p$ the morphism $\sigma \colon S_{K} \rightarrow \Hdg$ given by the $P$-torsor on $S_{K}$ appearing in the $G$-zip $H^{1}_{dR}$ factors as:
	
	\[\begin{tikzcd}
	S_{K}^{tor} \arrow[drr, "\zeta^{tor}"]\\ & & \GZipmu \arrow[r, "\beta"] & \Hdg\\ S_{K} \arrow[uu, "j"] \arrow[urr, "\zeta"]
	\end{tikzcd}\]
	
	\begin{definition}
		Let $S_{K}$ be a Hodge-type Shimura variety in characteristic $p$. Its tautological ring is defined
		$T^{\bullet}(S_{K}) \coloneqq Im(\sigma^{*}) = Im(\zeta^{*}\circ \beta^{*}) \subset A^{\bullet}(S_{K})$. Similarly, for a toroidal compactification $T^{\bullet}(S_{K}^{tor}) \coloneqq Im(\zeta^{tor,*}\circ \beta^{*})$.
	\end{definition}In general, $\zeta$ is smooth (\cite[3.1.2]{Zhang}) while smoothness of $\zeta^{tor}$ is expected to follow from \cite{LanStroh} (and is known in the PEL case). Wedhorn-Ziegler proved that $\beta^{*}$ is surjective and $\zeta^{tor,*}$ is injective, giving $T^{\bullet}(S_{K}^{tor})\coloneqq Im(\zeta^{tor,*}\circ \beta^{*}) \cong A^{\bullet}(\GZipmu)$. 
	
	\vspace{3mm}
	
	Now let $(G,\mu)$ be the cochacter datum associated with $X_{K}$ introduced in section 2. Then
	$\pi \colon \GFmu \longrightarrow \GZipmu$ is an isomorphism, where $\GFmu$ is the stack of zip flags defined in \cite[2.1]{GoldringKoskivirta1}. Thus, on Chow rings $$A^{\bullet}(\GZipmu_{k}) \cong A^{\bullet}(\GFmu_{k})$$ The Weyl group $W = W_{G,T} = \{\pm 1\}^{\mathbb{Z}/d\mathbb{Z}}$ acts on the abelian group $X^{*}(T_{k})$ via its action on $T_{k}$. This extends to an action on the symmetric algebra $S \coloneqq Sym(X^{*}(T_{k})\otimes_{\mathbb{Z}}{\mathbb{Q}})$ and we denote $I = S_{+}^{W}$. Then \cite[8.2]{Wedhorn-Ziegler} gives $$A^{\bullet}(\GFmu_{k}) = S/IS\cong \mathbb{Q}[z_{1},\ldots,z_{d}]/(z_{1}^{2},\ldots,z_{d}^{2})$$ 
	 
	These generators $z_{i} \in S$ are given by characters $$z_{i}\colon\begin{pmatrix}
	a_{j} & 0 \\0 & d_{j}
	\end{pmatrix} \mapsto d_{i}$$ in $X^{*}(T_{k})$. Thus, $$T^{\bullet}(X_{K,k}^{tor}) \cong \frac{\mathbb{Q}[z_{1},\ldots,z_{d}]}{(z_{1}^{2},\ldots,z_{d}^{2})}$$
	
	\begin{remark}
		This description is essentially independent of the prime. However, one has an action of Frobenius $\sigma$ on $S$ and this depends on the prime. For example, if $p$ is inert then $\sigma \cdot z_{i} = z_{i+1}$ for all $i$, whereas if $p$ is split then the action is trivial. Use $\sigma$ to also refer to the corresponding permutation of $\{1,\ldots,d\}$. 
	\end{remark}

	\begin{remark}
		It is possible to interpret the relation $z_{i}^{2}$ geometrically. In the notation of section 5, there is a now-where vanishing section in $H^{0}(\overline{Z_{w_{i}}},L_{i}\otimes L_{\sigma(i)}^{p})$ giving $ 0 = c_{1}(L_{i}\otimes L_{\sigma(i)}^{p}|_{\overline{Z_{w_{i}}}}) = (z_{i}+pz_{\sigma(i)})(z_{i}-pz_{\sigma(i)})$ for each $i$. 
	\end{remark}
	
	\section{A Siegel embedding}
	
	\begin{notation} Denote $x_{i} \coloneqq j^{*}(z_{i}) \in T^{\bullet}(X_{K,k})$. Denote $V(\rho)$ the vector bundle on $[G\backslash *]$ given by a representation $\rho$ of $G$. 
	\end{notation}
	
	In this section we prove that the relation $x_{1}\ldots x_{d} = 0$ holds in $T^{\bullet}(X_{K,k})$. Fixing a Siegel embedding gives a description of $z_{i}$ in terms of the Hodge vector bundle. Namely, the pullback of the Hodge vector bundle $\mathbb{E}$ splits on $X_{K,k}$ into line bundles $\omega_{i}$ with $x_{i} = c_{1}(\omega_{i})$. The relation follows because $c_{d}(\mathbb{E}) = 0$ \cite[1.2]{vanderGeer}. Note that we have made a judicious choice of embedding to give precisely $x_{i} = c_{1}(\omega_{i})$; this is simply to ease computation. We will work with the following explicit description of the general symplectic group involved in the Siegel Shimura datum.
	
	\begin{example}
		Consider the symplectic vector space $V = \mathbb{F}_{p}^{2d}$ with symplectic form given by $J \coloneqq \begin{pmatrix}
		0 & -I_{d} \\ I_{d} & 0
		\end{pmatrix}$. 
		Then $GSp(V) = \{M = \begin{pmatrix}
		A & B \\C & D
		\end{pmatrix} \mid M^{T}JM = \lambda J \text{ for some } \lambda \in k\}$. Let $\mathcal{A}_{d,K'}$ the moduli of principally-polarised abelian varieties with level $K'$ hyperspecial at $p$. Then as in section 3 there is a morphism $\sigma_{d} \colon \mathcal{A}_{d,K'} \rightarrow \mathop{\text{$GSp(V)$-{\tt Zip}}}\nolimits \rightarrow \Hdgd$.
		
		\vspace{3mm}
		
		The Siegel parabolic $P_{d} = \{\begin{pmatrix}
			A & B \\0 & D
		\end{pmatrix}\}$ has a Levi quotient isomorphic to $GL_{d}\times \mathbb{G}_{m}$. The Hodge vector bundle on $\mathcal{A}_{d,K'}$ is $\mathbb{E}\coloneqq \sigma_{d}^{*}V(\rho_{d})$ given by the representation $\rho_{d} \colon P_{d} \twoheadrightarrow GL_{d}\times \mathbb{G}_{m} \xrightarrow{p_{1}} GL_{d}$.	Explicitly 
		$$\begin{pmatrix} A & B \\ 0 & D \end{pmatrix} \mapsto D$$
	\end{example}
		
	Now consider the standard embedding $\iota \colon G \hookrightarrow GSp(V)=GSp_{2d}$. This can be described by "embedded squares" over $k$:
	$$(\begin{pmatrix}
	a_{i} & b_{i} \\ c_{i} & d_{i}
	\end{pmatrix},\lambda) \mapsto \begin{pmatrix}
	A & B \\ C & D
	\end{pmatrix}$$ where $A = diag(a_{1}, \ldots , a_{d})$, $B = diag(b_{1}, \ldots , b_{d})$ etc.
	The parabolic $P \subset G$ maps into the Siegel parabolic $P_{d} \subset GSp(V)$.
		
	\vspace{3mm}
		
	We can explicitly study the pullback of the Hodge vector bundle to $X_{K,k}$. The following diagram commutes, giving $\iota^{*}\mathbb{E} = \sigma^{*}V(\rho)$.
	
	\[\begin{tikzcd}
	\mathcal{A}_{d,K'} \arrow[r]& \Hdgd \\ X_{K,k} \arrow[u, hook] \arrow[r]& \Hdg \arrow[u]
	\end{tikzcd}\]
	
	Explicitly, $\rho\colon P \rightarrow P_{d} \rightarrow GL_{d}$ is given by 
	
	$$\begin{pmatrix}
	a_{i} & b_{i} \\0 & d_{i}
	\end{pmatrix} \mapsto D$$ 

	where $D = diag(d_{1},\ldots d_{d})$. Consider the characters of $P$ given by $$\rho_{j}\colon\begin{pmatrix}
	a_{i} & b_{i} \\0 & d_{i}
	\end{pmatrix} \mapsto d_{j}$$
	
	Then $\rho = \rho_{1}\oplus \ldots \oplus \rho_{d}$ and so $V(\rho) \simeq V(\rho_{1}) \oplus \ldots \oplus V(\rho_{d})$ as vector bundles on $\Hdg$.
	Considering the commutative diagram 
	
	\[\begin{tikzcd}
	X^{*}(P) \arrow[d] \arrow[r]& X^{*}(T) \arrow[d]\\ A^{\bullet}(\Hdg)\arrow[r] & S \arrow[r]& S/IS \arrow[r]& T^{\bullet}(X_{K,k})
	\end{tikzcd}\] 
	we see that $x_{i} = \sigma^{*}c_{1}(V(\rho_{i}))$.
	
	\begin{proposition}
		$x_{1}\cdot \ldots \cdot x_{d} = 0$ in $T^{\bullet}(X_{K,k})$.
	\end{proposition}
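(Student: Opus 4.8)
The plan is to obtain the relation $x_{1}\cdot\ldots\cdot x_{d}=0$ as the pullback, along the Siegel embedding reduced modulo $p$, of van der Geer's vanishing of the top Chern class of the Hodge bundle in the Siegel case \cite[1.2]{vanderGeer}.

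First I would isolate the two ingredients already assembled above. The left vertical arrow of the last diagram of the section is a morphism, also denoted $\iota\colon X_{K,k}\to\mathcal{A}_{d,K'}$, namely the reduction modulo $p$ of the standard Siegel embedding, furnished by the functoriality of integral canonical models of Hodge-type Shimura varieties at hyperspecial level; and under it one has $\iota^{*}\mathbb{E}=\sigma^{*}V(\rho)$. Since $\rho=\rho_{1}\oplus\cdots\oplus\rho_{d}$ this gives
\[
\iota^{*}\mathbb{E}\;=\;\sigma^{*}V(\rho_{1})\oplus\cdots\oplus\sigma^{*}V(\rho_{d})\;=\;\omega_{1}\oplus\cdots\oplus\omega_{d},
\]
a direct sum of line bundles $\omega_{i}\coloneqq\sigma^{*}V(\rho_{i})$, and the character computation recorded just before the proposition gives $c_{1}(\omega_{i})=x_{i}$.

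Next I would apply the Whitney sum formula together with functoriality of Chern classes to get, in $A^{\bullet}(X_{K,k})$,
\[
x_{1}\cdot\ldots\cdot x_{d}\;=\;c_{1}(\omega_{1})\cdots c_{1}(\omega_{d})\;=\;c_{d}\bigl(\iota^{*}\mathbb{E}\bigr)\;=\;\iota^{*}c_{d}(\mathbb{E}).
\]
By \cite[1.2]{vanderGeer} the class $c_{d}(\mathbb{E})$ vanishes in the rational Chow ring of $\mathcal{A}_{d,K'}$, so the right-hand side is zero and hence $x_{1}\cdot\ldots\cdot x_{d}=0$. Since each $x_{i}=\sigma^{*}c_{1}(V(\rho_{i}))$ lies in $T^{\bullet}(X_{K,k})=Im(\sigma^{*})$, this is an identity in the tautological ring, as claimed.

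The only substantive point is the existence of $\iota$ on special fibres together with the identification $\iota^{*}\mathbb{E}\cong\sigma^{*}V(\rho)$ --- that is, the commutativity of the last diagram --- which rests on the compatibility of the $G$-zip attached to $H^{1}_{dR}$ with the embedding $G\hookrightarrow GSp_{2d}$ and on the explicit "embedded squares" description of $\iota$. I would also remark that nothing characteristic-$p$-specific has entered: $c_{d}(\mathbb{E})$ is already rationally trivial on $\mathcal{A}_{d}$ in characteristic $0$ (already for $d=1$, where $\mathcal{A}_{1}$ is a rational curve as a coarse space), so this first relation is not where the characteristic-$p$ phenomena appear; those surface only in the linear-independence argument of the next section, which is the genuine difficulty.
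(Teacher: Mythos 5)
Your proof is correct and is essentially the paper's own argument: pull back van der Geer's relation $c_{d}(\mathbb{E})=0$ along the Siegel embedding, using $\iota^{*}\mathbb{E}=\sigma^{*}V(\rho)$ and the splitting $V(\rho)=V(\rho_{1})\oplus\cdots\oplus V(\rho_{d})$ with $x_{i}=\sigma^{*}c_{1}(V(\rho_{i}))$, so that $x_{1}\cdots x_{d}=c_{d}(\iota^{*}\mathbb{E})=\iota^{*}c_{d}(\mathbb{E})=0$. The extra remarks you add (functoriality of integral models giving $\iota$ on special fibres, and the characteristic-independence of the relation) are consistent with the paper and do not change the argument.
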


	\begin{proof} 
		On $X_{K,k}$ we have $\iota^{*}\mathbb{E} = \sigma^{*}V(\rho)$ so
		$$x_{1} \ldots x_{d} = c_{d}(\sigma^{*}V(\rho)) = c_{d}(\iota^{*}\mathbb{E}) = \iota^{*}c_{d}(\mathbb{E}) = 0$$
		with the last equality given in \cite[1.2]{vanderGeer}.
	\end{proof}
	
	\section{Positivity of classes: proof of Theorem 1}
	
	In this section we prove that there are no further relations.
	
	\begin{thm1}
		Let $F$ be a totally real field, $p$ an unramified prime, $k$ an algebraic closure of $\mathbb{F}_{p}$, $K$ a hyperspecial level and $X_{K,k}$ the Hilbert modular variety at level $K$ in characteristic $p$.
		$$T^{\bullet}(X_{K,k}) = \frac{\mathbb{Q}[x_{1},\ldots,x_{d}]}{(x_{1}^{2},\ldots ,x_{d}^{2},x_{1}\cdot \ldots \cdot x_{d})}$$
	\end{thm1}
	
	The proof of Theorem 1 follows from:
	
	\begin{proposition}
		Let $p$ be a prime unramified in $F$. The $d$ classes $$\gamma_{i} \coloneqq x_{1}\ldots \widehat{x_{i}} \ldots x_{d}$$ are linearly independent in $A^{d-1}(X_{K,k})$.
	\end{proposition}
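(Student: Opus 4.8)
The plan is to separate the classes $\gamma_i$ by pairing them against $d$ proper divisors. Since $X_{K,k}$ is smooth, any $\eta\in A^{d-1}(X_{K,k})$ restricts along the closed immersion of a codimension-one subvariety $D\hookrightarrow X_{K,k}$ to a class $\eta|_D\in A^{d-1}(D)$, and if $D$ is proper this class has a well-defined degree $\deg_D(\eta|_D)\in\mathbb{Q}$; moreover $\eta\mapsto\deg_D(\eta|_D)$ is $\mathbb{Q}$-linear. So it suffices to produce proper divisors $D_1,\dots,D_d\subset X_{K,k}$ for which the $d\times d$ matrix $(n_{i\ell})$, $n_{i\ell}\coloneqq\deg_{D_\ell}(\gamma_i|_{D_\ell})$, is nonsingular: a relation $\sum_i a_i\gamma_i=0$ would then give $\sum_i n_{i\ell}a_i=0$ for all $\ell$, hence $a=0$.

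For $D_\ell$ I take the vanishing locus of the $\ell$-th partial Hasse invariant, i.e.\ the codimension-one zip stratum closure $\overline{Z_{w_\ell}}$ of \cite{GoldringKoskivirta1} (cf.\ Remark 3.4). Two geometric facts are needed. First, \emph{properness together with a cycle class}: the partial Hasse invariants extend to sections of line bundles on $X_{K,k}^{tor}$ that are nowhere zero along the boundary (there the semiabelian scheme is a torus, hence ordinary), so $D_\ell$ is closed in the proper $X_{K,k}^{tor}$, hence proper, and $[D_\ell]=c_1$ of the extended line bundle in $A^1(X_{K,k}^{tor})$; concretely $[D_\ell]=pz_\ell-z_{\sigma(\ell)}$ with $z_k=c_1(\omega_k)$ as in section 4. (The precise weight --- whether the factor $p$ multiplies $z_\ell$ or $z_{\sigma^{\pm1}(\ell)}$ --- depends on normalisations and plays no role below; what is used is only that $\sum_\ell[D_\ell]=(p-1)\sum_k z_k$, the class of the total Hasse invariant.) Second, \emph{positivity of the top tautological class}: $N\coloneqq\deg_{X_{K,k}^{tor}}(z_1\cdots z_d)>0$. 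Indeed $\det\mathbb{E}=\bigotimes_k\omega_k$ is nef and big on $X_{K,k}^{tor}$, being the pullback of an ample line bundle along the (proper, birational) morphism to the minimal compactification, so $\deg_{X_{K,k}^{tor}}\!\big(c_1(\det\mathbb{E})^d\big)>0$; but in $T^\bullet(X_{K,k}^{tor})=\mathbb{Q}[z_1,\dots,z_d]/(z_1^2,\dots,z_d^2)$ of section 3 one has $c_1(\det\mathbb{E})^d=(z_1+\cdots+z_d)^d=d!\,z_1\cdots z_d$, only the squarefree term surviving; hence $d!\,N>0$.

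Granting these, the computation is short. Recall $x_k=j^*(z_k)$, so $\gamma_i$ is the restriction to $X_{K,k}$ of $z_1\cdots\widehat{z_i}\cdots z_d\in T^{d-1}(X_{K,k}^{tor})$; since $D_\ell\subset X_{K,k}$ is proper, the projection formula for $D_\ell\hookrightarrow X_{K,k}^{tor}$ gives
\[
n_{i\ell}=\deg_{X_{K,k}^{tor}}\!\Big((pz_\ell-z_{\sigma(\ell)})\cdot z_1\cdots\widehat{z_i}\cdots z_d\Big).
\]
Because $z_k^2=0$, multiplying $z_1\cdots\widehat{z_i}\cdots z_d$ by $z_\ell$ gives $z_1\cdots z_d$ when $\ell=i$ and $0$ otherwise, and by $z_{\sigma(\ell)}$ gives $z_1\cdots z_d$ when $\sigma(\ell)=i$ and $0$ otherwise; therefore $n_{i\ell}=N\big(p\,[\ell=i]-[\sigma(\ell)=i]\big)$, i.e.\ $(n_{i\ell})=N\,(pI-P_\sigma)$ with $P_\sigma$ the permutation matrix of $\sigma$. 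The eigenvalues of $P_\sigma$ are roots of unity, so $\det(pI-P_\sigma)=\prod_c(p^{|c|}-1)>0$, the product running over the cycles $c$ of $\sigma$; combined with $N>0$ this makes $(n_{i\ell})$ nonsingular, proving the Proposition. (Theorem~1 follows: injectivity of $j^*$ in degree $d-1$ forces injectivity in all degrees $<d$ --- a nonzero homogeneous element of degree $r\le d-2$ can be multiplied by a suitable squarefree monomial into a nonzero element of degree $d-1$ --- whence $\ker j^*=(z_1\cdots z_d)$ and $T^\bullet(X_{K,k})=\mathbb{Q}[x_1,\dots,x_d]/(x_1^2,\dots,x_d^2,x_1\cdots x_d)$.)

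The real difficulty is the one flagged in the introduction. In van der Geer's Siegel argument the top tautological degree is one-dimensional, so one nonzero class on one complete subvariety suffices; here that degree is $d$-dimensional, so one must exhibit several complete strata \emph{and} identify their cycle classes precisely --- not merely show they are nonzero --- in order to assemble a matrix whose nonsingularity can be verified. Correspondingly, the main work lies in the two geometric inputs: properness of the Goren--Oort divisors via extension of the partial Hasse invariants to $X_{K,k}^{tor}$, and the positivity $N>0$ via nefness and bigness of the Hodge bundle on the toroidal compactification.
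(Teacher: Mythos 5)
Your argument is correct, and its skeleton is the same as the paper's: you use the same proper Goren--Oort divisors $D_\ell$ (proper because the partial Hasse invariants do not vanish on the toroidal boundary), the same identification of $[D_\ell]$ with $z_\ell - pz_{\sigma(\ell)}$ up to normalisation, the same projection-formula reduction to an intersection computation in $T^{\bullet}(X_{K,k}^{tor})\cong\mathbb{Q}[z_1,\dots,z_d]/(z_1^2,\dots,z_d^2)$, and essentially the same linear-algebra finish (your cycle-type formula $\det(pI-P_\sigma)=\prod_c(p^{|c|}-1)$ is the paper's circulant determinant in the inert case, and has the small merit of treating all splitting behaviours of $p$ uniformly). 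The one genuine divergence is how non-degeneracy is certified: you push all the way down to degrees of zero-cycles, so your matrix is $N(pI-P_\sigma)$ and you must know $N=\deg_{X_{K,k}^{tor}}(z_1\cdots z_d)\neq 0$, which you obtain from nefness and bigness of $\det\mathbb{E}$ as the pullback of an ample bundle on the minimal compactification --- a standard but genuinely extra characteristic-$p$ input. The paper avoids this: since the hypothetical relation restricts to zero on each $D_i$, pushing forward gives $(z_i-pz_{\sigma(i)})\cdot\sum_k a_k\gamma_k^{tor}=(a_i-pa_{\sigma(i)})\,z_1\cdots z_d=0$ in $A^{d}(X_{K,k}^{tor})$, and one only needs that $z_1\cdots z_d\neq 0$ there, which is already contained in Wedhorn--Ziegler's computation of $T^{\bullet}(X_{K,k}^{tor})$ quoted in section 3; no positivity is required. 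In exchange, your version is numerically explicit and closer in spirit to van der Geer's original positivity argument. One small inaccuracy: your parenthetical claim that only $\sum_\ell[D_\ell]=(p-1)\sum_k z_k$ is used is not literally true --- the computation of $(n_{i\ell})$ uses the individual classes $[D_\ell]$ --- though the point you intend is right: any of the normalisation variants yields a matrix $N(pI-P)$ or $N(I-pP)$ with determinant $\pm N\prod_c(p^{|c|}\mp 1)\neq 0$, so the ambiguity is harmless.
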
 
	 
	The closures of Ekedahl-Oort strata $\overline{X_{K,w}} = \zeta^{-1}(\overline{Z_{w}})$ give cycles in $T^{\bullet}(X_{K})$ \cite[6.1]{Wedhorn-Ziegler}. The codimension $1$ strata are those indexed by $w_{i} = (\epsilon_{j}) \in \{\pm 1\}^{d}$ with $\epsilon_{j} = (-1)^{\delta_{ij}+1}$ (length $d-1$). There are sections $s_{i}$ of $L_{i}\otimes L_{\sigma(i)}^{-p}$, where $L_{i}\coloneqq \beta^{*}V(\rho_{i})$ on $\GZipmu$, such that $Z(s_{i}) = \overline{Z_{w_{i}}}$. To be precise, there is a separating system of partial Hasse invariants for the ordinary locus $Z_{ord}$ as defined in \cite[3.4.2]{GoldringKoskivirta2}. The divisors $D_{i} \coloneqq Z(\zeta^{*}s_{i}) = \overline{X_{K,w_{i}}}$ are the Goren-Oort strata closures defined and shown to be smooth and proper in \cite{GorenOort} and \cite{Goren}. Indeed, one can see by the modular description of $X_{K}^{tor}$ that $\overline{X_{K,w_{i}}}$ doesn't intersect the toroidal boundary: a semi-abelian variety with real multiplication is either abelian or ordinary. 
	
	\begin{proof}[Proof of Proposition 5.1]
		Denote $\gamma_{i}^{tor} \coloneqq z_{1}\ldots \widehat{z_{i}} \ldots z_{d}$ in $A^{d-1}(X_{K,k}^{tor})$ so that $\gamma_{i} = j^{*}\gamma_{i}^{tor}$. We obtain the following commutative diagram. 
		
		\[\begin{tikzcd}
		& X_{K,k}^{tor} \arrow[dr, "\zeta^{tor}"]\\ D_{i} \arrow[ur, hook] \arrow[dr, hook, "f_{i}"]& & \GZipmu_{k} \\ & X_{K,k}  \arrow[uu, hook', "j"] \arrow[ur, "\zeta"]
		\end{tikzcd}\]
		
		Assume that $\sum_{i=1}^{d}a_{i}\gamma_{i} = 0$ in $A^{d-1}(X_{K,k})$. 
		For each $i$ the projection formula \cite[2.6(b)]{Fulton} for the morphism $f_{i} \circ j$ applied to the class $\sum_{i=1}^{d}a_{i}\gamma_{i}^{tor}$ in $A^{d-1}(X_{K,k}^{tor})$ gives
		$$(f_{i}\circ j)_{*}(f_{i}\circ j)^{*}\sum_{i=1}^{d}a_{i}\gamma_{i}^{tor} = [D_{i}].\sum_{i=1}^{d}a_{i}\gamma_{i}^{tor}$$
		$(f_{i}\circ j)^{*}\sum_{i=1}^{d}a_{i}\gamma_{i}^{tor} = f_{i}^{!}\sum_{i=1}^{d}a_{i}\gamma_{i} = 0$ by assumption. Since $[D_{i}] = \zeta^{tor,*}[\overline{Z_{i}}] = z_{i}-pz_{\sigma(i)}$ this implies that for all $i \in \mathbb{Z}/d\mathbb{Z}$
		\begin{align*}
		0 &= [D_{i}].\sum_{i=1}^{d}a_{i}\gamma_{i}^{tor} \\&= (z_{i}-pz_{\sigma(i)}).\sum_{i=1}^{d}a_{i}\gamma_{i}^{tor} \\&= (a_{i}-pa_{\sigma(i)})(z_{1}\ldots z_{d})
		\end{align*}
		Since $deg(z_{1}\ldots z_{d}) = d > 0$ we have that $a_{i}-pa_{\sigma(i)}=0$ for all $i$. It remains to show that the only solution is $a_{i}=0$ for all $i$. For $p$ inert, $\sigma(i)=i+1$ (modulo $d$) and this system of linear equations is given by the circulant matrix
		$$A \coloneqq \begin{pmatrix}
			1 & -p & 0& \ldots& 0\\ 0 & 1 & -p & &   \\ \vdots& & \ddots& \ddots& \\ & & & &  \\ & & & 1 & -p \\ -p & & & & 1
		\end{pmatrix}$$
		A finite sequence of row operations gives an upper triangular matrix with diagonal entries $1,\ldots,1,1\pm p^{n}$ for some $n\neq 0$. The determinant of this matrix is non-zero, so the only solution to the system of linear equations is $$a_{1} = \ldots = a_{d} = 0$$ A similar argument yields the other cases.
		Hence, $\gamma_{1},\ldots ,\gamma_{d}$ are linearly independent.
	\end{proof}
	
	The proof of Theorem 1 can now be completed by showing that there are no relations in lower degrees (below $d$).

	\begin{proof}[Proof of Theorem 1]
		
		$T^{\bullet}(X_{K,k})$ is a quotient of $\frac{\mathbb{Q}[x_{1},\ldots,x_{d}]}{(x_{1}^{2},\ldots x_{d}^{2},x_{1}\cdot \ldots \cdot x_{d})}$. The degree of the generators $x_{i}$ is $1$ so the maximum degree is degree $d-1$. Suppose that there is a relation $$\sum\limits_{I \subset \{1,\ldots,d\},|I|=l}a_{I}z_{I}=0$$ in $T^{l}(X_{K,k})$, indexed over multisets, with $l < d$. For any $I_{0}$ there exists some $i \notin I_{0}$. Let $J = \{1,\ldots,i-1,i+1,\ldots,d\} \setminus I_{0}$. Multiplying by $z_{J}$ gives $\sum_{I}a_{I}z_{I \sqcup J}=0$ in $T^{d-1}(X_{K,k})$. This can be rewritten uniquely as $\sum_{I}a_{I}z_{I \sqcup J} = \sum b_{j}\gamma_{j}$ by Proposition 5.1. If $i$ is not in  $I \sqcup J$ then either $I=I_{0}$ or there is a repeated index so $z_{I \sqcup J} = 0$. Hence, $a_{I_{0}}$ is the only potentially non-zero coefficient of $\gamma_{i}$ and by linear independence in $T^{d-1}(X_{K,k})$ we have $a_{I_{0}} = 0$. Since this holds for each $I_{0}$ we have that the relation is trivial. 
		
		\vspace{3mm}
		
		Thus, there are no relations in lower degree and $T^{\bullet}(X_{K,k}) \cong \frac{\mathbb{Q}[x_{1},\ldots,x_{d}]}{(x_{1}^{2},\ldots x_{d}^{2},x_{1}\cdot \ldots \cdot x_{d})}$
		
	\end{proof}

	\bibliographystyle{alpha}
	\bibliography{refs}
	
	\end{document}